\documentclass[11pt,a4paper]{article}
\usepackage[margin=2.5cm]{geometry}
\usepackage{fontspec}
\usepackage{amsmath,amssymb,amsthm}
\usepackage{hyperref}
\usepackage[capitalise]{cleveref}

\newtheorem{theorem}{Theorem}[section]
\newtheorem{lemma}[theorem]{Lemma}
\newtheorem{corollary}[theorem]{Corollary}

\newcommand{\cR}{\mathcal R}
\newcommand{\e}{\mathrm e}

\hypersetup{
  colorlinks=true,
  linkcolor=blue,
  citecolor=blue,
  urlcolor=blue,
  pdftitle={Uniform Minimum-Degree Guarantees in Hypergraph Edge-Coloring Games},
  pdfauthor={}
}

\title{A One-Third Bound for the Maker--Breaker Degree Game}
\author{
Kaizhe Chen\thanks{Department of Mathematics, Princeton University, Princeton, USA. Email: kc8556@princeton.edu}
~~~~
Xiaonan Chen\thanks{Department of Mathematics, University of California Irvine, Irvine, USA. Email: xiaonac1@uci.edu}
~~~~
Haotian Yang\thanks{Department of Mathematics, California Institute of Technology, Pasadena, USA. Email: hyang3@caltech.edu}
}
\date{}

\begin{document}
\maketitle

\begin{abstract}
Let \(H\) be a finite hypergraph with rank $r$ and minimum degree \(d\). In the Maker--Breaker degree game, Maker and Breaker alternately claim previously unclaimed hyperedges of \(H\), with Maker moving first, and Breaker seeks to maximize the minimum degree $\delta(H_{\mathrm B})$ of his spanning subhypergraph $H_{\mathrm B}$. We prove that, for every fixed \(r\ge2\) and all sufficiently large \(d\), Breaker has a deterministic strategy satisfying
\[
\delta(H_{\mathrm B})\ge \frac{d}{r+1}-\sqrt{d\log d}.
\]
For graphs, this improves the classical universal lower bound \(d/4\) to \(d/3-\sqrt{d\log d}\).
The proof introduces a virtual balancing process, encodes local imbalance by a multiplicative risk, and keeps the resulting risk vector in the Shearer region throughout the game.
\end{abstract}

\section{Introduction}

Maker--Breaker games form one of the central classes of positional games. 
A Maker--Breaker game is specified by a finite board and a prescribed family of winning sets. Maker and Breaker alternately claim previously unclaimed elements of the board, with Maker moving first. 
Maker seeks to claim every element of some winning set, whereas Breaker seeks to prevent this. For general background, see \cite{BeckBook,HKSZBook,KrivelevichSurvey}.

In this paper, we study the following Maker--Breaker degree game.
Let \(H=(V,E)\) be a finite hypergraph. 
For \(v\in V\), the \emph{degree} \(d_H(v)\) of $v$ is the number of hyperedges containing \(v\). Denote by \(\delta(H)\) the minimum degree of $H$.
Maker and Breaker alternately claim previously unclaimed hyperedges of \(H\), with Maker moving first, until every hyperedge has been claimed.
Let \(H_{\mathrm M}\) and \(H_{\mathrm B}\) be the spanning subhypergraphs formed by the hyperedges claimed by Maker and Breaker, respectively.
Breaker seeks to maximize \(\delta(H_{\mathrm B})\), while Maker seeks to minimize it.
Let \(\delta_{\mathrm B}(H)\) denote the largest integer \(q\) such that Breaker has a strategy guaranteeing \(\delta(H_{\mathrm B})\ge q\) at the end of the game.

The degree game is much better understood on dense boards. For the complete graph \(K_n\), Beck~\cite{BeckDeterministic} proved that 
\(\delta_{\mathrm B}(K_n)\le n/2-\Omega(\sqrt n)\), while Alon, Krivelevich, Spencer, and Szab\'o~\cite{DiscrepancyGames} proved that \(\delta_{\mathrm B}(K_n)\ge n/2-O(\sqrt{n\log n})\). In fact, applying their discrepancy-game theorem \cite[Theorem~1]{DiscrepancyGames} gives \(\delta_{\mathrm B}(H)\ge d/2-O(\sqrt{d\log n})\) for every graph $H$ on $n$ vertices with minimum degree $d$. 
In particular, \(\delta_{\mathrm B}(H)\ge d/2-o(d)\) whenever \(\log n=o(d)\). The main difficulty therefore lies in obtaining a bound independent of the number of vertices.

For arbitrary graph $H$ with minimum degree $d$, the best previously known lower bound was the classical estimate \(\delta_{\mathrm B}(H)\ge \lfloor d/4\rfloor\).
A standard orientation-and-pairing strategy (see, e.g., \cite{HKSZdegree}) proves this bound: orient the graph so that every vertex \(v\) has outdegree at least \(\lfloor d_H(v)/2\rfloor\), and then partition the outgoing edges at each vertex into pairs. Whenever Maker claims one edge of a pair, Breaker claims the other. This guarantees $\delta(H_{\mathrm B})\ge \lfloor d/4\rfloor$ at the end of the game. 
Despite its simplicity, this $1/4$ bound has not been improved for decades. In his ICM survey, Krivelevich \cite[Section~8]{KrivelevichSurvey} remarked that ``even improving it to \( (1/4+\varepsilon)d\) would be quite nice''. 
Beck \cite[Open Problem~16.1]{BeckBook} also singled out this problem in his monograph, listing it as the first of his seven ``most humiliating'' open problems in positional game theory.

For $d$-regular graphs, Beck \cite{BeckSurplus} records the following ``probabilistic intuition'' of Szab\'o: after each Maker move, Breaker chooses one endpoint of the edge claimed by Maker uniformly at random and tries to claim an edge incident with that endpoint. 
At the end of the game, if Maker has claimed \(x\) edges incident with a vertex \(v\), then one expects Breaker to have chosen \(v\) roughly \(x/2\) times. Since 
$x+\frac{x}{2}\approx d$, we have \(x/2\approx d/3\). This suggests $d/3$ as an important threshold for the degree game.

Recently, Gy\H{o}rffy~\cite{GyorffyDegree} realized the \(d/3\) bound for several special families of regular graphs. He showed that Breaker can guarantee degree at least \(\lfloor d/3\rfloor\) on the \(d\)-dimensional hypercube, with analogous bounds for grids and tori. 

The \emph{rank} of a hypergraph $H$ is the maximum cardinality of an edge in $H$.
In this note, we prove the \(d/3\) bound for every graph of minimum degree $d$ and, more generally, obtain the corresponding \(d/(r+1)\) bound for hypergraphs of rank $r$, in both cases up to a lower-order error term.
\begin{theorem}\label{main-theorem}
For every fixed integer \(r\ge2\), there exists a constant \(d_0(r)\) such that, for every finite hypergraph $H$ with rank $r$ and minimum degree \(d\ge d_0(r)\),
\[
\delta_{\mathrm B}(H)\ge \frac{d}{r+1}-\sqrt{d\log d}.
\]
\end{theorem}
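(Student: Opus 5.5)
We derandomize Szabó's heuristic through a \emph{virtual balancing process}. Breaker keeps, for each vertex $v$, a virtual counter $b_v$ alongside the actual counts $m_v$ (Maker's edges at $v$) and $u_v$ (unclaimed edges at $v$). When Maker claims an edge $e$ with $|e|\le r$, Breaker spreads one unit of virtual credit over the vertices of $e$. A fair split gives each $v\in e$ roughly $1/r$, and $v$ may be charged again when Breaker claims an edge through it. The target is the invariant that, at every vertex, Breaker's degree keeps pace with $m_v/r$ plus the edges Breaker has taken at $v$. Then at the end $d_{H_{\mathrm B}}(v)\gtrsim m_v/r$ and $m_v+d_{H_{\mathrm B}}(v)\ge d$ together give $d_{H_{\mathrm B}}(v)\ge d/(r+1)$ up to the error. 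This mirrors the computation $x+x/r\approx d$ from the introduction.

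To make this deterministic we encode local imbalance multiplicatively. For each vertex set
\[
\rho_v=\exp\bigl(\lambda(\text{deficit}_v)\bigr)\cdot(\text{normalizing factor}),
\]
where the deficit measures how far Breaker's degree lags behind the target $\frac{1}{r+1}$ fraction of the edges resolved at $v$. We take $\lambda\approx\sqrt{\log d/d}$. Each Breaker move is chosen as an unclaimed edge $f$ that greedily decreases the potential. The danger is the vertices whose deficit could exceed $\sqrt{d\log d}$ while too few unclaimed edges remain to repair it. Call such a vertex risky, and assign it a risk $p_v\le \rho_v$.

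The key global statement is that the risk vector $(p_v)$ stays in the Shearer region of the dependency graph on $V$, where $v\sim w$ if they share an edge. This follows the spirit of the lopsided local lemma and its algorithmic versions. It guarantees that the risky events can be avoided simultaneously: some virtual continuation exists in which no vertex ever exceeds its deficit threshold. Shearer's criterion is used instead of a union bound precisely so that the argument is independent of $n$. Each vertex interacts only with neighbours through its at most $d_H(v)$ edges, while the risk is $\exp(-\Theta(\log d))$, which is small compared with $1/(\e\cdot\text{local degree})$ once $d\ge d_0(r)$. Concretely, we check that the multiplicative update of $\rho$ caused by one Maker move followed by Breaker's chosen reply leaves the vector inside the region. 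This uses the convexity identity $\mathbb E[\e^{\lambda X}]\le \e^{\lambda\mathbb E X+\lambda^2/8}$ for the fair split, averaged over the at most $r$ choices of vertex in Maker's edge, so that a good choice exists.

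The main obstacle is that step: showing that Breaker always has a move that keeps the risk vector inside the Shearer region. Shearer's region is not defined by local inequalities, so we would work with the stronger but checkable local condition $p_v\le x_v\prod_{w\sim v}(1-x_w)$ with $x_v=\e\, p_v$. Because the degrees of the dependency graph are only bounded in terms of edge multiplicities, we would restrict dependencies to the edges actually relevant to the deficit. The remaining work is to absorb the factor $\lambda^2\cdot d\approx\log d$ coming from the fluctuation term into the $\sqrt{d\log d}$ slack. We expect this to force choosing the constant in $\lambda$ carefully relative to $r$.
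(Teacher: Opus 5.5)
Your overall architecture is the right one: virtual selection of one vertex of Maker's edge, a multiplicative risk, the Shearer region as a dynamic invariant, and averaging over the at most $r$ choices. But the step you flag as the main obstacle is where the proposal has a genuine gap, and your suggested fix does not close it.

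Averaging only gives, for each $v\in C(e)$, $\sum_{h}z^{(h)}_v\le |C(e)|\,z_v$. To conclude that \emph{some} candidate $z^{(h)}$ stays in the invariant set, that set must satisfy a strong condition on the slice where coordinates off $C(e)$ are fixed. It must be cut out there by a single linear inequality, or more generally have convex complement. The local LLL condition $p_v\le x_v\prod_{w\sim v}(1-x_w)$ with $x_v=\e p_v$ is a family of separate nonlinear constraints and fails this. Take $C(e)=\{v_1,v_2\}$, a vertex $u_1$ adjacent to $v_2$ but not $v_1$, and a vertex $u_2$ adjacent to $v_1$ but not $v_2$, with both constraints at $u_1,u_2$ nearly tight. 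Selecting $v_1$ multiplies $p_{v_2}$ by $a>1$ and breaks the constraint at $u_1$; selecting $v_2$ breaks the one at $u_2$. So that invariant cannot be maintained.

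The missing idea is that the Shearer region itself has the needed structure. Let $C$ be a clique, $A=V\setminus C$, and suppose $Q_B(z_0)>0$ for all $B\subseteq A$. Then $(z_A,x)\in\cR(\mathcal G)$ if and only if $\sum_{v\in C}x_vR_v(z_0,A)<1$, where $R_v(z_0,A)=Q_{A\setminus\Gamma(v)}(z_0)/Q_A(z_0)$. This follows from the deletion recurrence (an independent set meets $C$ at most once), together with the monotonicity $R_v(z,S)\le R_v(z,T)$ for $S\subseteq T$, which handles every subset $B\cup D$. Because the condition is linear in the clique coordinates, averaging immediately yields a good $h$. This is why you must keep the Shearer region rather than pass to a local sufficient condition.

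The argument also needs the averaging inequality exactly, with no $\e^{\lambda^2/8}$ loss. Your normalizing factor would have to absorb that loss exactly. The paper instead uses tilted weights $a=\gamma/\beta$ and $b=(1-\gamma)/(1-\beta)$ with $\beta=(r-1)/r$, for which $\beta a+(1-\beta)b=1$.

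Three further points need repair.

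First, the graph ``$v\sim w$ if they share an edge'' has degrees up to $(r-1)d_H(v)$. That is not bounded in terms of the minimum degree $d$, so an initial risk of order $d^{-c}$ need not lie below $1/(\e(\Delta+1))$. The issue is high-degree vertices, not edge multiplicities. The fix is to choose for each $v$ a set $F_v$ of exactly $d$ incident edges, and let $v$'s counters react only to Maker moves in $F_v$. Then join $u,v$ iff $F_u\cap F_v\ne\varnothing$. This gives $\Delta(\mathcal G)\le(r-1)d$, the symmetric criterion puts $p\mathbf{1}_V$ in $\cR(\mathcal G)$, and $\{v\in e:e\in F_v\}$ is still a clique.

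Second, Breaker's real move should correspondingly be the first unclaimed edge of $F_h$ for the selected $h$, not a greedy potential move. This gives $d^F_{\mathrm B}(h)\ge y_h-1$.

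Third, the invariant does not yield an LLL-style existence of a good ``virtual continuation''; that would say nothing against an adversarial Maker. It yields simply $1-z_v=Q_{\{v\}}(z)>0$ for all $v$. The stopping rule must therefore be designed so that a dangerous vertex has risk at least $1$. In the paper, $x_v=m$ together with $x_v+2y_v<d$ forces $y_v\le\ell-m$, and hence $z_v\ge 1$.
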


In particular, this gives the following consequence for graphs.

\begin{corollary}
There exists a constant \(d_0\) such that, for every finite graph $H$ with minimum degree \(d\ge d_0\),
\[
\delta_{\mathrm B}(H)\ge \frac d3-\sqrt{d\log d}.
\]
\end{corollary}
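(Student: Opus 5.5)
The Corollary is the special case $r=2$ of \cref{main-theorem}, so the plan is simply to check that a graph with minimum degree $d\ge d_0$ meets the hypotheses of that theorem with $r=2$, and then read off the bound.

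First, view a finite graph $H=(V,E)$ as a hypergraph whose hyperedges are its edges, each a $2$-element subset of $V$. The notions $d_H(v)$, $\delta(H)$ and the degree game coincide for the graph and for this hypergraph. Claiming edges of the graph is the same as claiming hyperedges, and $H_{\mathrm B}$ is the same spanning subgraph in both readings. Hence $\delta_{\mathrm B}(H)$ has the same value in both settings.

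Second, verify the rank condition. Set $d_0:=\max\{d_0(2),1\}$, where $d_0(2)$ is the constant supplied by \cref{main-theorem} for $r=2$. If $d\ge d_0\ge 1$, then every vertex lies in at least one edge, so $E\neq\emptyset$. Every edge has exactly two vertices, so the rank of $H$ is exactly $2$. This small step is needed because \cref{main-theorem} is stated for hypergraphs of rank exactly $r$, and an edgeless graph would have rank $0$. If loops or multi-edges were allowed, multi-edges are harmless since hypergraphs may repeat hyperedges. Loops are excluded because $H$ is a graph in the usual sense, and in any case a single vertex set of size $1$ would not raise the rank above $2$.

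Finally, apply \cref{main-theorem} with $r=2$ and $d\ge d_0\ge d_0(2)$. It gives
\[
\delta_{\mathrm B}(H)\ge \frac{d}{2+1}-\sqrt{d\log d}=\frac d3-\sqrt{d\log d},
\]
which is the claimed bound. There is no real obstacle here: all the difficulty sits in \cref{main-theorem} itself, and the only point needing care is the rank-exactly-$2$ check above.
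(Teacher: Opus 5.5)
Your proposal is correct and matches the paper, which obtains the corollary directly as the case $r=2$ of \cref{main-theorem}, where $d/(r+1)=d/3$. Your extra check that a graph with $d\ge 1$ has rank exactly $2$ is harmless added care; the paper's proof in fact only uses that every hyperedge has at most $r$ vertices.
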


The key idea of the proof is to run a virtual balancing process that, after each Maker move, selects one of the affected vertices at which Breaker attempts to respond. We associate with each vertex a multiplicative risk measuring the imbalance between the number of times it is selected and the number of times it is affected but not selected. An imbalance large enough to threaten the desired degree bound forces the risk to be at least \(1\). We maintain throughout the game that the resulting risk vector lies in the Shearer region of the \(2\)-section graph. Since membership in this region forces every coordinate to be less than \(1\), no such imbalance can occur. This dynamic use of the Shearer region replaces a one-shot local-lemma argument and introduces no dependence on the number of vertices.

Related degree and balancing games have been widely studied. 
We mention only a few directions here.
Degree and pseudorandomness games on complete graph boards were studied in~\cite{DiscrepancyGames,JumbleG}, while related minimum-degree games on general and sparse graph boards appear in~\cite{BaloghPluhar,HKSZdegree}. Gy\H{o}rffy recently considered the corresponding Chooser--Picker degree game on regular graphs~\cite{GyorffyChooser}; see also~\cite{Knox} for limitations of general comparisons between Maker--Breaker and Chooser--Picker games. Bennett, Frieze, and Pegden~\cite{BennettFriezePegden} studied a related biased minimum-degree game on complete uniform hypergraphs. Generalized pairing strategies~\cite{GyorffyPluhar}, discrepancy minimization~\cite{BansalDiscrepancy}, and equitable orientations of uniform hypergraphs~\cite{CaroWestYuster,CohenLochet} provide further related balancing frameworks.

\section{The Shearer region}

Throughout this section, let \(\mathcal G\) be a finite graph with vertex set \(V\). For any \(z=(z_v)_{v\in V}\in[0,\infty)^V\) and \(S\subseteq V\), define
\[
Q_S(z):=\sum_{\substack{I\subseteq S\\ I\text{ independent in }\mathcal G}}
(-1)^{|I|}\prod_{v\in I}z_v.
\]
In particular, \(Q_\varnothing(z)=1\). The \emph{Shearer region} of \(\mathcal G\) is defined as
$$\cR(\mathcal G):=\{z\in[0,\infty)^V:Q_S(z)>0\text{ for every }S\subseteq V\}.$$
We next present several basic properties of the Shearer region that will be used in the proof; see Scott and Sokal~\cite{ScottSokal} for background.

For any \(v\in V\), let \(\Gamma(v)\) denote the set of neighbors of \(v\) in \(\mathcal G\). For any $S\subseteq V\setminus\{v\}$, separating the independent sets of \(S\cup\{v\}\) according to whether they contain \(v\) gives
\begin{equation}\label{deletion-recurrence}
Q_{S\cup\{v\}}(z)=Q_S(z)-z_v Q_{S\setminus\Gamma(v)}(z),\qquad {\rm for\  any}\  z\in[0,\infty)^V.
\end{equation}
For \(v\in V\), \(S\subseteq V\setminus\{v\}\), and \(z\in[0,\infty)^V\) with \(Q_S(z)>0\), put \(R_v(z,S):=Q_{S\setminus\Gamma(v)}(z)/Q_S(z)\).

\begin{lemma}\label{ratio-monotonicity}
If \(z\in\cR(\mathcal G)\), \(v\in V\), and \(S\subseteq T\subseteq V\setminus\{v\}\), then \(R_v(z,S)\le R_v(z,T)\).
\end{lemma}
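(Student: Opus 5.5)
The plan is to reduce to adding a single vertex and then argue by induction on the size of the larger set. Since \(z\in\cR(\mathcal G)\), every \(Q_X(z)\) is strictly positive, so all the ratios \(R_w(z,X)\) below are well defined and all cross-multiplications preserve inequalities. Any chain \(S\subseteq T\) can be built by adding the elements of \(T\setminus S\) one at a time. It therefore suffices to prove
\[
R_v(z,S)\le R_v(z,S\cup\{u\})\qquad\text{for all }u\notin S\cup\{v\},\ S\subseteq V\setminus\{v\}.
\]
I will prove the full statement of \cref{ratio-monotonicity} (for all \(v\) and all \(S\subseteq T\subseteq V\setminus\{v\}\)) by induction on \(|T|\), with the single-step inequality as the inductive step.

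\textbf{Case \(u\in\Gamma(v)\).} Here \((S\cup\{u\})\setminus\Gamma(v)=S\setminus\Gamma(v)\), so the numerator is unchanged. By \eqref{deletion-recurrence},
\[
Q_{S\cup\{u\}}(z)=Q_S(z)-z_uQ_{S\setminus\Gamma(u)}(z)\le Q_S(z),
\]
since \(z_u\ge0\) and \(Q_{S\setminus\Gamma(u)}(z)>0\). The denominator can only decrease while staying positive, so the ratio can only increase.

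\textbf{Case \(u\notin\Gamma(v)\).} Write
\[
A=Q_{S\setminus\Gamma(v)},\quad A_u=Q_{(S\setminus\Gamma(v))\setminus\Gamma(u)},\quad B=Q_S,\quad B_u=Q_{S\setminus\Gamma(u)}.
\]
Since \(u\notin\Gamma(v)\), we have \((S\cup\{u\})\setminus\Gamma(v)=(S\setminus\Gamma(v))\cup\{u\}\). Applying \eqref{deletion-recurrence} to both numerator and denominator turns the desired inequality into
\[
\frac{A}{B}\le\frac{A-z_uA_u}{B-z_uB_u},
\]
where both denominators are positive. After cross-multiplying and cancelling \(AB\), this reduces to \(z_u(A_uB-AB_u)\le0\). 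If \(z_u=0\) there is nothing to prove. Otherwise the inequality is \(A_u/A\le B_u/B\), that is,
\[
R_u\bigl(z,S\setminus\Gamma(v)\bigr)\le R_u(z,S).
\]
This is again an instance of the lemma, now for the vertex \(u\) and the pair \(S\setminus\Gamma(v)\subseteq S\subseteq V\setminus\{u\}\).

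The main point to check is that this recursion is well-founded. The new larger set is \(S\), which has \(|S|<|S\cup\{u\}|\le|T|\), so the induction hypothesis on \(|T|\) applies. The base case \(|T|=0\) forces \(S=T=\varnothing\) and is trivial. This is why the induction must quantify over all vertices \(v\) simultaneously: the recursive call switches the distinguished vertex from \(v\) to \(u\).
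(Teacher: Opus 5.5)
Your proof is correct and follows essentially the same route as the paper: reduce to adding a single vertex $u$, handle $u\in\Gamma(v)$ by noting the denominator can only decrease, and for $u\notin\Gamma(v)$ use \eqref{deletion-recurrence} to reduce to $R_u(z,S\setminus\Gamma(v))\le R_u(z,S)$, which is again an instance of the lemma with the distinguished vertex switched to $u$ and so follows by induction. The only cosmetic difference is that you induct on $|T|$ with the full statement as hypothesis, whereas the paper inducts on $|S|$ for single-vertex extensions and then chains explicitly through the vertices of $S\cap\Gamma(v)$.
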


\begin{proof}
It suffices to consider \(T=S\cup\{u\}\), where \(u\notin S\cup\{v\}\). We induct on \(|S|\). For the base case \(S=\varnothing\), if \( u\notin\Gamma(v)\), then \(R_v(z,\{u\})=1\); if \(u\in\Gamma(v)\), then \(R_v(z,\{u\})=1/(1-z_u)\). In both cases, $R_v(z,\{u\})\ge 1=R_v(z,\varnothing)$. This proves the base case. Now, suppose that \(S\ne\varnothing\) and that the assertion holds for every set of cardinality less than \(|S|\).

If \(u\in\Gamma(v)\), then \((S\cup\{u\})\setminus\Gamma(v)=S\setminus\Gamma(v)\), while \eqref{deletion-recurrence} gives \(Q_{S\cup\{u\}}(z)\le Q_S(z)\). Hence, \(R_v(z,S)\le R_v(z,S\cup\{u\})\), since the numerator is unchanged, while the positive denominator does not increase.

Now, suppose that \(u\notin\Gamma(v)\). Applying \eqref{deletion-recurrence} to the numerator and denominator of $R_v(z,S\cup\{u\})$ gives
\[
\frac{R_v(z,S\cup\{u\})}{R_v(z,S)}=\frac{Q_{S\setminus\Gamma(v)}(z)-z_u Q_{S\setminus(\Gamma(v)\cup \Gamma(u))}(z)}{Q_S(z)-z_u Q_{S\setminus\Gamma(u)}(z)} \cdot \frac{Q_S(z)}{Q_{S\setminus\Gamma(v)}(z)}
=\frac{1-z_u R_u(z,S\setminus\Gamma(v))}{1-z_u R_u(z,S)}.
\]
The denominator in the last fraction equals \(Q_{S\cup\{u\}}(z)/Q_S(z)\) and is positive. By adding the vertices of \(S\cap\Gamma(v)\) one by one and applying the induction hypothesis, we obtain \( R_u(z,S\setminus\Gamma(v)) \le R_u(z,S)\). The last fraction is therefore at least \(1\), completing the induction.
\end{proof}

\begin{lemma}\label{clique-section}
Let \(C\) be a clique in \(\mathcal G\) and put \(A:=V\setminus C\). Fix \(z_A=(z_v)_{v\in A}\in[0,\infty)^A\), and extend it to a vector \(z_0=(z_v)_{v\in V}\) by setting \(z_v=0\) for every \(v\in V\setminus A\). Assume that \(Q_B(z_0)>0\) for every \(B\subseteq A\). For every \(x=(x_v)_{v\in C}\in[0,\infty)^C\), let \((z_A,x)\in[0,\infty)^V\) denote the vector whose restrictions to \(A\) and \(C\) are \(z_A\) and \(x\), respectively. Then \((z_A,x)\) lies in \(\cR(\mathcal G)\) if and only if
\begin{equation}\label{clique-section-inequality}
\sum_{v\in C}x_vR_v(z_0,A)<1.
\end{equation}
\end{lemma}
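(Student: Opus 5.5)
The plan is to compute \(Q_S\) explicitly for every \(S\subseteq V\) by using the fact that an independent set meets the clique \(C\) in at most one vertex. Then \cref{ratio-monotonicity} will show that the constraint with \(S=V\) is the hardest one.

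Fix \(S\subseteq V\) and write \(S=B\cup D\) with \(B:=S\cap A\) and \(D:=S\cap C\). Since \(C\) is a clique, every independent \(I\subseteq S\) is either contained in \(B\), or has the form \(J\cup\{v\}\) with \(v\in D\) and \(J\subseteq B\setminus\Gamma(v)\) independent. The coordinates of \((z_A,x)\) on \(A\) agree with those of \(z_0\). Therefore
\[
Q_S\bigl((z_A,x)\bigr)=Q_B(z_0)-\sum_{v\in D}x_v\,Q_{B\setminus\Gamma(v)}(z_0)
=Q_B(z_0)\Bigl(1-\sum_{v\in D}x_vR_v(z_0,B)\Bigr).
\]
Here \(R_v(z_0,B)\) is well defined because \(B\subseteq A\subseteq V\setminus\{v\}\) for \(v\in C\), and \(Q_B(z_0)>0\) by hypothesis. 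Consequently \(Q_S((z_A,x))>0\) if and only if \(\sum_{v\in D}x_vR_v(z_0,B)<1\).

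For necessity, take \(S=V\), so that \(B=A\) and \(D=C\). The condition above is then exactly \eqref{clique-section-inequality}.

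For sufficiency, I will first check that \(z_0\in\cR(\mathcal G)\), so that \cref{ratio-monotonicity} applies to \(z_0\). Since \(z_0\) vanishes on \(C\), every independent set meeting \(C\) contributes \(0\) to \(Q_T(z_0)\). Hence \(Q_T(z_0)=Q_{T\cap A}(z_0)>0\) for every \(T\subseteq V\).

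Now let \(v\in C\) and \(B\subseteq A\). Applying \cref{ratio-monotonicity} with \(S=B\subseteq T=A\subseteq V\setminus\{v\}\) gives \(R_v(z_0,B)\le R_v(z_0,A)\). All \(x_v\) and all ratios are nonnegative, so
\[
\sum_{v\in D}x_vR_v(z_0,B)\le\sum_{v\in C}x_vR_v(z_0,A)<1.
\]
Thus \(Q_S((z_A,x))>0\) for every \(S\subseteq V\), which means \((z_A,x)\in\cR(\mathcal G)\).

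The only step needing care is justifying the use of \cref{ratio-monotonicity}, which requires its input vector to lie in the Shearer region. This is why I apply it to \(z_0\), whose membership follows from the hypothesis on \(A\), rather than to \((z_A,x)\), whose membership is what we are trying to prove.
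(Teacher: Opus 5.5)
Your proposal is correct and follows essentially the same route as the paper: you expand \(Q_{B\cup D}\) using the fact that an independent set meets the clique \(C\) at most once, take \(S=V\) for necessity, and for sufficiency check that \(z_0\in\cR(\mathcal G)\) and then apply \cref{ratio-monotonicity} to \(z_0\) to get \(R_v(z_0,B)\le R_v(z_0,A)\). You also correctly note why \cref{ratio-monotonicity} must be applied to \(z_0\) rather than to \((z_A,x)\), and that the ratios are nonnegative, which is needed to pass from the sum over \(D\) to the sum over \(C\).
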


\begin{proof}
For every \(S\subseteq V\), we have \(Q_S(z_0)=Q_{S\cap A}(z_0)>0\), so \( z_0\in\cR(\mathcal G)\). If \((z_A,x)\in\cR(\mathcal G)\), then \(Q_V(z_A,x)>0\). Since \(C\) is a clique, an independent set in \(V\) contains at most one vertex of \(C\). Hence
\[
Q_V(z_A,x)
=Q_A(z_0)-\sum_{v\in C}x_v Q_{A\setminus\Gamma(v)}(z_0)
=Q_A(z_0)\left(1-\sum_{v\in C}x_v R_v(z_0,A)\right),
\]
which proves \eqref{clique-section-inequality}.

On the other hand, suppose that \eqref{clique-section-inequality} holds. 
To prove \((z_A,x)\in\cR(\mathcal G)\), it suffices to show that $Q_{B\cup D}(z_A,x)>0$ for any \(B\subseteq A\) and \(D\subseteq C\). Similarly, since $D$ is a clique, we have
\[
Q_{B\cup D}(z_A,x)=Q_B(z_0)-\sum_{v\in D}x_v Q_{B\setminus\Gamma(v)}(z_0)
=Q_B(z_0)\left(1-\sum_{v\in D}x_v R_v(z_0,B)\right).
\]
By Lemma \ref{ratio-monotonicity}, $R_v(z_0,B)\le R_v(z_0,A)$ for any $v\in C$. So, \eqref{clique-section-inequality} yields 
\[\sum_{v\in D}x_v R_v(z_0,B)\le\sum_{v\in C}x_v R_v(z_0,A)<1.\]
It follows that \(Q_{B\cup D}(z_A,x)>0\). This completes the proof.
\end{proof}

Write \(\mathbf{1}_V\) for the constant vector in \(\mathbb R^V\) whose coordinates are all equal to \(1\).

\begin{lemma}\label{symmetric-shearer}
Suppose that \(\Delta(\mathcal G)\le D\), where \(D\ge1\). Then, for any \(p\ge 0\) with \(\e p(D+1)\le 1\), we have \(p \mathbf{1}_V \in \cR(\mathcal G)\).
\end{lemma}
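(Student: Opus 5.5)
We prove the symmetric local-lemma bound directly from the deletion recurrence \eqref{deletion-recurrence}, by induction on \(|S|\). Write \(z=p\mathbf 1_V\). The claim to prove simultaneously, for every \(S\subseteq V\), is:
\begin{enumerate}
\item[(a)] \(Q_S(z)>0\);
\item[(b)] for every \(v\in V\setminus S\), we have \(Q_{S\cup\{v\}}(z)\ge (1-\e p)\,Q_S(z)\), or equivalently \(R_v(z,S)\le \e\).
\end{enumerate}
This is the standard Lov\'asz local lemma computation, with the ratio \(Q_{S\setminus\Gamma(v)}/Q_S\) playing the role of the inverse conditional probability \(1/\Pr(\text{avoid } \Gamma(v)\cap S)\). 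If \(p=0\) there is nothing to prove, so assume \(p>0\).

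The base case \(S=\varnothing\) is immediate. We have \(Q_\varnothing=1>0\), and \(R_v(z,\varnothing)=1\le \e\).

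For the inductive step, fix \(S\) and \(v\notin S\), and suppose (a) and (b) hold for all proper subsets of \(S\cup\{v\}\), in particular for all subsets of \(S\). List the neighbours of \(v\) in \(S\) as \(u_1,\dots,u_k\), where \(k\le D\). Set \(S_0:=S\setminus\Gamma(v)\) and \(S_i:=S_0\cup\{u_1,\dots,u_i\}\), so that \(S_k=S\). Then telescoping gives
\[
R_v(z,S)=\frac{Q_{S_0}(z)}{Q_{S_k}(z)}=\prod_{i=1}^{k}\frac{Q_{S_{i-1}}(z)}{Q_{S_{i-1}\cup\{u_i\}}(z)}.
\]
Each \(S_{i-1}\subseteq S\) and \(u_i\notin S_{i-1}\), so each factor is controlled by the induction hypothesis for the set \(S_{i-1}\). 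When \(S_{i-1}\cup\{u_i\}\) is a proper subset of \(S\cup\{v\}\), this is legitimate. That always holds, because \(v\) is not in it.

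For each factor, apply \eqref{deletion-recurrence} to get
\[
\frac{Q_{S_{i-1}\cup\{u_i\}}}{Q_{S_{i-1}}}=1-p\,R_{u_i}(z,S_{i-1})\ge 1-\e p.
\]
Since \(\e p\le 1/(D+1)<1\), this bound is positive, and therefore
\[
R_v(z,S)\le (1-\e p)^{-k}\le \Bigl(1-\tfrac1{D+1}\Bigr)^{-D}=\Bigl(1+\tfrac1D\Bigr)^{D}\le \e.
\]
This proves (b) for the pair \((S,v)\). It also yields (a) for \(S\cup\{v\}\), since \(Q_{S\cup\{v\}}=Q_S\,(1-pR_v(z,S))\ge Q_S(1-\e p)>0\).

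The only delicate point is organising the induction so that the hypothesis is never used circularly. Induct on \(n=|S\cup\{v\}|\), and prove at stage \(n\) both (a) for all sets of size \(n\) and (b) for all pairs with \(|S|=n-1\). Statement (b) at stage \(n\) needs only (a) and (b) for sets of size at most \(n-1\), which stage \(n-1\) supplies. Also, each \(Q_{S_{i-1}}>0\) by (a) at earlier stages, so the ratios are well defined. With this ordering, (a) holds for every \(S\subseteq V\), and hence \(p\mathbf 1_V\in\cR(\mathcal G)\).
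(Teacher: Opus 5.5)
Your proof is correct and takes essentially the paper's approach. Both run a two-part induction on $|S|$ (positivity of $Q_S$ plus a one-step ratio bound), remove the at most $D$ neighbours of $v$ one at a time to bound $Q_{S\setminus\Gamma(v)}/Q_S$ by $(1+1/D)^D\le \e$, and close with the recurrence \eqref{deletion-recurrence}. The only difference is bookkeeping: you carry the per-step factor as $1-\e p$ (equivalently $R_v(z,S)\le \e$), whereas the paper carries $1-1/(D+1)$.
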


\begin{proof}
Put \(x:=1/(D+1)\). We prove by induction on \(|S|\) that
$Q_S(p\mathbf{1}_V)>0$ for every $S\subseteq V$ and $Q_S(p\mathbf{1}_V)\ge(1-x)Q_{S\setminus\{v\}}(p\mathbf{1}_V)$ for every $v\in S$.
For \(S=\varnothing\), the first assertion follows from \(Q_\varnothing(p\mathbf{1}_V)=1\), while the second is vacuous.
Now, suppose that \(S\ne\varnothing\) and that the two assertions hold for every set of cardinality less than \(|S|\).

Fix \(v\in S\), put \(T:=S\setminus\{v\}\), and suppose that \(v\) has \(h\) neighbors in \(T\). Repeatedly applying the induction hypothesis while deleting these $h$ neighbors gives 
\[Q_T(p\mathbf{1}_V)\ge(1-x)^h Q_{T\setminus\Gamma(v)}(p\mathbf{1}_V)>0.\] 
It follows from \eqref{deletion-recurrence} that
\[
\frac{Q_S(p\mathbf{1}_V)}{Q_T(p\mathbf{1}_V)}=1-\frac{pQ_{T\setminus\Gamma(v)}(p\mathbf{1}_V)}{Q_T(p\mathbf{1}_V)}
\ge 1- \frac{p}{(1-x)^{h}}  
\ge 1-\frac{p}{(1-x)^{D}}= 1-p\left(1+\frac{1}{D}\right)^D\ge 1-\e p,
\]
where we have used the fact that $h\le D$.
By the assumption that \(\e p\le x\), the last expression is at least \(1-x>0\). This completes the induction and hence the lemma.
\end{proof}

\begin{lemma}\label{clique-average}
Let \(z=(z_v)_{v\in V}\in\cR(\mathcal G)\), and let \(C\) be a clique in \( \mathcal G\). Let \(s\) be a positive integer, and for each \(j\in\{1,\ldots,s\}\), let \(z^{(j)}=(z_v^{(j)})_{v\in V}\in[0,\infty)^V\).
Suppose that \(z_v^{(j)}=z_v\) for every \(v\in V\setminus C\) and \(j\in\{1,\ldots,s\}\), and that 
\begin{equation}\label{24}
\sum_{j=1}^s z_v^{(j)}\le z_v s 
\end{equation}
for every \(v\in C\). Then, there exists \(j\in\{1,\ldots,s\}\) such that \( z^{(j)}\in\cR(\mathcal G)\).
\end{lemma}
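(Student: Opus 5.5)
The plan is to reduce the statement to the linear criterion of Lemma~\ref{clique-section} and then average over \(j\). Put \(A:=V\setminus C\) and let \(z_0\) be the vector that agrees with \(z\) on \(A\) and vanishes on \(C\). All the vectors \(z^{(j)}\) agree with \(z\) on \(A\), so each of \(z\) and \(z^{(j)}\) has the form \((z_A,x)\) considered in Lemma~\ref{clique-section}, with the same \(z_A\).

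The first step is to verify the hypothesis of Lemma~\ref{clique-section}, namely \(Q_B(z_0)>0\) for every \(B\subseteq A\). This is immediate. For \(B\subseteq A\), every independent set contained in \(B\) avoids \(C\), so \(Q_B(z_0)=Q_B(z)\). This is positive because \(z\in\cR(\mathcal G)\).

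The second step applies the ``only if'' direction of Lemma~\ref{clique-section} to \(z\) itself. Since \(z\in\cR(\mathcal G)\), we get
\[
\sum_{v\in C} z_v R_v(z_0,A)<1 .
\]
The weights \(R_v(z_0,A)\) depend only on \(z_A\), so they are the same for \(z\) and for every \(z^{(j)}\). They are nonnegative, being ratios of positive quantities.

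The third step averages. Summing over \(j\) and using \eqref{24} together with \(R_v(z_0,A)\ge0\), we obtain
\[
\frac1s\sum_{j=1}^s\sum_{v\in C} z^{(j)}_v R_v(z_0,A)\le \sum_{v\in C} z_v R_v(z_0,A)<1 .
\]
Hence some \(j\) satisfies \(\sum_{v\in C} z^{(j)}_v R_v(z_0,A)<1\). The ``if'' direction of Lemma~\ref{clique-section} then gives \(z^{(j)}\in\cR(\mathcal G)\).

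The only point needing care is the observation that the weights \(R_v(z_0,A)\) are common to all \(j\), which is what turns membership in \(\cR(\mathcal G)\) into a linear condition on the \(C\)-coordinates. Once that is noted, the argument is a one-line averaging, so I expect no real obstacle beyond checking the hypotheses of Lemma~\ref{clique-section}.
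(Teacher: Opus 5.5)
Your proposal is correct and follows the paper's proof essentially step for step: zero out the $C$-coordinates to get $z_0$, apply the ``only if'' direction of Lemma~\ref{clique-section} to $z$, average the resulting linear condition over $j$ using \eqref{24}, and apply the ``if'' direction to the $z^{(j)}$ that satisfies it. Your explicit remark that the weights $R_v(z_0,A)$ are nonnegative, which is needed when summing \eqref{24} against them, is a small point the paper leaves implicit.
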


\begin{proof}
Put \(A:=V\setminus C\). Let \(z_0\) be obtained from \(z\) by replacing the coordinates in \(C\) with \(0\). Note that \(Q_S(z_0)=Q_{S}(z)>0\) for any \( S\subseteq A\).  By Lemma \ref{clique-section}, we have \(\sum_{v\in C}z_v R_v(z_0,A)<1\). It follows from \eqref{24} that
\[
\sum_{j=1}^s\sum_{v\in C}z_v^{(j)}R_v(z_0,A)=\sum_{v\in C} \sum_{j=1}^s z_v^{(j)}R_v(z_0,A)
\le \sum_{v\in C}z_v s R_v(z_0,A)<s.
\]
Thus, there exists \(j\in\{1,\ldots,s\}\) such that \(\sum_{v\in C}z_v^{(j)}R_v(z_0,A)<1\). Applying Lemma \ref{clique-section} again gives \( z^{(j)}\in\cR(\mathcal G)\).
\end{proof}

\section{Proof of the main theorem}

Fix \(r\ge 2\) and let \(H=(V,E)\) be a finite hypergraph with rank $r$ and sufficiently large minimum degree \(d\). For every \(v\in V\), choose a set \( F_v\subseteq E\) consisting of exactly \(d\) hyperedges incident with \(v\). 
At the end of the game, write \(d_{\mathrm M}^F(v):=|F_v\cap E(H_{\mathrm M})|\) and \(d_{\mathrm B}^F(v):=|F_v\cap E(H_{\mathrm B})|\). Then, \(d_{\mathrm M}^F(v)+d_{\mathrm B}^F(v)=d\) for any $v\in V$.
Set \[\eta:=\sqrt{\frac{3\log d}{d}}\qquad {\rm  and }\qquad m:=\left\lceil\left(\frac{r-1}{r+1}+\eta\right)d\right\rceil.\]

A \emph{full round} consists of a Maker move followed by a Breaker move. We now define a virtual process that is performed between these two moves (Maker's possible final unmatched move causes no virtual update). Initially, every vertex $v\in V$ is \emph{active} and has counters \(x_v=y_v=0\).
Suppose that Maker claims \(e\in E\) in a full round, and let
\[
C(e):=\{v\in e:v\text{ is active and }e\in F_v\}.
\]
If \(C(e)=\varnothing\), no counter is changed. Otherwise, before claiming his hyperedge, Breaker \emph{selects} a vertex \(h\in C(e)\), increases \(y_h\) by \(1\), and increases \(x_v\) by \(1\) for every \(v\in C(e)\setminus\{h\}\). The rule for selecting \(h\) will be specified later.
After this update, for any active vertex \(v\in V\), we declare $v$ \emph{safe} if \(x_v+2y_v\ge d\); if \(x_v+2y_v< d\) but \(x_v=m\), we declare it \emph{dangerous}; otherwise, it remains active. Once a vertex becomes safe or dangerous, its counters are frozen.

\begin{lemma}\label{virtual-to-real}
If Breaker has a deterministic rule for making the virtual choices under which no vertex ever becomes dangerous, then he has a deterministic strategy such that, for every \(v\in V\),
$$d_{\mathrm B}^F(v)\ge \frac{d-m}{2}-1.$$
\end{lemma}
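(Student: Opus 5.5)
The plan is to turn each virtual selection into a concrete Breaker move. In a full round where Maker claims $e$ and the virtual rule selects $h\in C(e)$, Breaker claims an arbitrary unclaimed hyperedge of $F_h$ if one exists, and an arbitrary unclaimed hyperedge otherwise. We call such a selection a \emph{request} at $h$; it \emph{succeeds} if an unclaimed hyperedge of $F_h$ is available, and \emph{fails} otherwise. If $C(e)=\varnothing$, Breaker also claims an arbitrary unclaimed hyperedge. This strategy is deterministic because the virtual rule is. Distinct successful requests at $v$ claim distinct hyperedges of $F_v$. Hence, whenever the first $k$ requests at $v$ all succeed, Breaker owns at least $k$ hyperedges of $F_v$.

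Two invariants come straight from the definition of the virtual process. First, $x_v\le m$ at all times. Indeed, $x_v$ grows by at most $1$ per update, and a vertex reaching $x_v=m$ without being safe would be dangerous, which is excluded by hypothesis. Second, every Maker hyperedge of $F_v$ claimed in a full round while $v$ is active has $v\in C(e)$. It therefore increases $x_v+y_v$ by exactly $1$. Consequently, at any moment while $v$ is active, the number of Maker hyperedges in $F_v$ is at most $x_v+y_v$. This holds except for hyperedges claimed after $v$ is frozen, and for Maker's possible final unmatched move, which adds at most $1$.

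Now fix $v$. The argument splits into three cases.

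\emph{Case 1: some request at $v$ fails.} Consider the first failure, and let $x',y'$ be the counters just after that update. At that moment $F_v$ is fully claimed. Breaker owns at least $y'-1$ of its hyperedges, one for each earlier successful request, and Maker owns at most $x'+y'$. Hence
\[
d_{\mathrm B}^F(v)\ge\max\bigl(y'-1,\;d-x'-y'\bigr)\ge\frac{d-x'-1}{2}\ge\frac{d-m-1}{2}.
\]

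\emph{Case 2: no request at $v$ fails and $v$ becomes safe.} At the moment of freezing, $2y_v\ge d-x_v\ge d-m$, so Breaker already owns $y_v\ge (d-m)/2$ hyperedges of $F_v$.

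\emph{Case 3: no request at $v$ fails and $v$ stays active to the end.} Here $x_v<m$, every request at $v$ succeeded, and $F_v$ is fully claimed at the end. Therefore
\[
d_{\mathrm B}^F(v)\ge\max\bigl(y_v,\;d-x_v-y_v-1\bigr)\ge\frac{d-m-1}{2}.
\]

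In all three cases $d_{\mathrm B}^F(v)\ge (d-m)/2-1$.

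The only delicate step is the bookkeeping. We must make sure that Maker edges of $F_v$ are counted only while $v$ is active and that the final unmatched Maker move is charged as the "$-1$". We must also compare, in the failure case, against the counters at the time of failure rather than the frozen ones; this works because both counters only increase.
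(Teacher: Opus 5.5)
Your proof is correct and follows the paper's argument. It uses the same strategy of requesting a hyperedge at the selected vertex, the same safe-case bound $2y_v\ge d-x_v\ge d-m$, and the same count of Maker's hyperedges in $F_v$ by $x_v+y_v$ plus one for the unmatched final move. The differences are only bookkeeping: the paper notes that at most one request at $v$ can fail, so $d_{\mathrm B}^F(v)\ge y_v-1$ always, and it closes the active case with the non-safety inequality $x_v+2y_v<d$; you instead treat a failed request as its own case, and in both that case and the active case you average the request-count bound with $d$ minus Maker's count.
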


\begin{proof}
Fix an ordering of \(E\). Suppose that Maker claims \(e\in E\) in a full round. If \(C(e)\ne\varnothing\) and the virtual choice is \(v\), Breaker claims the first unclaimed hyperedge in \(F_v\), provided one exists. In all other cases, he claims the first unclaimed hyperedge of \(H\). Such a hyperedge exists because the move occurs in a full round.

For any fixed \(v\in V\), the final value of \(y_v\) records the number of full rounds in which \(v\) is selected. Within these \(y_v\) full rounds, there is at most one in which Breaker claims no hyperedge in \( F_v\). Indeed, after such a full round every hyperedge in \(F_v\) has already been claimed, so no later Maker move can cause \(v\) to be selected again. 
Therefore, \(d_{\mathrm B}^F(v)\ge y_v-1\).

If \(v\) becomes safe, then \(x_v+2y_v\ge d\). Moreover, \(x_v\le m\), since \(v\) did not become dangerous. 
Thus, \(2y_v\ge d-x_v\ge d-m\).
The desired estimate then follows by the inequality \(d_{\mathrm B}^F(v)\ge y_v-1\).

It remains to consider the case where \(v\) is active until the end. 
Every time Maker claims a hyperedge in \(F_v\), $C(e)\ne \emptyset$, so either \(x_v\) or \(y_v\) increases by \(1\). Thus, $d_{\mathrm M}^F(v)\le x_v+y_v+1$, where 1 comes from Maker's possible final unmatched move.
At the end of the game, since $v$ becomes neither safe nor dangerous, \( x_v<m\) and \(x_v+2y_v<d\), and hence \(x_v+y_v<(m+d)/2\). So, 
$d_{\mathrm M}^F(v)\le 1+(m+d)/2$, and the desired estimate follows from the fact \(d_{\mathrm M}^F(v)+d_{\mathrm B}^F(v)=d\).
\end{proof}

\noindent Define 
\[ 
\beta:=\frac{r-1}{r},\qquad
\ell:=\left\lfloor\frac{d+m-1}{2}\right\rfloor,\qquad \gamma:=\frac m\ell,\qquad a:=\frac{\gamma}{\beta}, \qquad {\rm and} \qquad
b:=\frac{1-\gamma}{1-\beta}.
\]
For sufficiently large $d$, we have \(m\le d-3\) and hence \(\ell>m\). Moreover,
\begin{equation}\label{rbeta}
\gamma-\beta\ge\frac{2m}{d+m}-\frac{r-1}{r}
=\frac{(r+1)m-(r-1)d}{r(d+m)}\ge \frac{(r+1)\eta d}{2rd}=\frac{(r+1)\eta}{2r},
\end{equation}
where the last inequality follows from the definition of $m$ and the fact that $m<d$. In particular, we obtain \(0<\beta<\gamma<1\), and hence \(a>1>b>0\).

For every active vertex \(v\), define its \emph{risk} by \( z_v:=a^{x_v-m}b^{y_v+m-\ell}\). 
Initially, the risk of every vertex equals \(p:=a^{-m}b^{m-\ell}\).
When a vertex $v$ ceases to be active, its risk $z_v$ is frozen along with $x_v$ and $y_v$. Since \(m=\gamma\ell\), we have
\[
-\log p = \ell\left(\gamma\log\frac\gamma\beta+(1-\gamma)\log\frac{1-\gamma}{1-\beta}\right) = \int^\gamma_\beta \frac{\ell(\gamma -t)}{t(1-t)}dt
\ge \int^\gamma_\beta 4\ell(\gamma -t)dt= 2\ell (\gamma-\beta)^2. 
\]
By \eqref{rbeta} and the fact that $\ell >m\ge (r-1)d/(r+1)$, we derive
\begin{equation}\label{p}
-\log p\ge \frac{(r^2-1)\eta^2d}{2r^2}
\ge \frac{3\eta^2d}{8}.
\end{equation}

\begin{lemma}\label{dangerous}
    For any $v\in V$, if \(v\) becomes dangerous, then \(z_v\ge1\).
\end{lemma}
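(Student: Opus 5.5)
When $v$ becomes dangerous, its counters are frozen with $x_v=m$ and $x_v+2y_v<d$. The plan is to show that then $y_v\le \ell-m$. Since $a>1$ and $0<b<1$, this immediately gives the required inequality
\[
z_v=a^{x_v-m}b^{y_v+m-\ell}=a^{0}\,b^{y_v-(\ell-m)}\ge 1 ,
\]
because the exponent $y_v-(\ell-m)$ of $b$ is nonpositive.

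It remains to bound $y_v$. The condition $x_v+2y_v<d$ with $x_v=m$ gives $2y_v<d-m$. Since $y_v$ and $d-m$ are integers, this means $2y_v\le d-m-1$, so $y_v\le\lfloor (d-m-1)/2\rfloor$.

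Now compare this with $\ell-m$. By definition,
\[
\ell-m=\left\lfloor\frac{d+m-1}{2}\right\rfloor-m=\left\lfloor\frac{d-m-1}{2}\right\rfloor .
\]
The last step holds because $m$ is an integer, so subtracting $m$ commutes with the floor. Hence $y_v\le \ell-m$, as required.

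The main point to check is the integrality bookkeeping. The strict inequality must be converted to $2y_v\le d-m-1$ before taking floors. The definition of $\ell$ with its $-1$ is chosen precisely so that the floors match. Positivity of $\ell-m$, noted earlier from $m\le d-3$, is not needed for this argument. We only need $b\in(0,1)$, which was established from \eqref{rbeta}.
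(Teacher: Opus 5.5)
Your proof is correct and matches the paper's argument: from $x_v=m$ and $x_v+2y_v<d$ you get $y_v\le\lfloor(d-m-1)/2\rfloor=\ell-m$, and then $b<1$ gives $z_v=b^{y_v+m-\ell}\ge 1$. One minor quibble is that $0<b<1$ itself relies on $\gamma<1$, i.e.\ on $\ell>m$, so positivity of $\ell-m$ does enter indirectly, through the already established fact that $0<b<1$.
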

\begin{proof}
    If \(v\) becomes dangerous, then \(x_v+2y_v<d\) and \(x_v=m\), so \(y_v\le\lfloor(d-m-1)/2\rfloor=\ell-m\). Since \(b<1\), the definition of \(z_v\) gives \(z_v=b^{y_v+m-\ell}\ge 1\).
\end{proof}

\begin{lemma}\label{risk-properties}
Suppose that Maker claims an edge \(e\in E\) in a full round when the current risk vector is \(z=(z_v)_{v\in V}\), and that \(C(e)\ne\varnothing\). For each \(h\in C(e)\), let \(z^{(h)}=(z^{(h)}_v)_{v\in V}\) denote the risk vector resulting from the virtual update in which \(h\) is selected. Then, for every \(v\in C(e)\),
\begin{equation}\label{average}
    \sum_{h\in C(e)}z_v^{(h)}\le z_v|C(e)|.
\end{equation}
\end{lemma}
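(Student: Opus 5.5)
Fix $v\in C(e)$ and put $k:=|C(e)|$. Since $e$ has at most $r$ vertices, $1\le k\le r$. All vertices in $C(e)$ are active, so their risks are not frozen, and the update changes exactly one of $x_v,y_v$ by $1$. By the definition $z_v=a^{x_v-m}b^{y_v+m-\ell}$, selecting $h=v$ multiplies $z_v$ by $b$, and selecting any $h\in C(e)\setminus\{v\}$ multiplies $z_v$ by $a$. Any later change of status (becoming safe or dangerous) only freezes the counters, so $z^{(h)}_v$ is exactly this updated value. Hence
\[
\sum_{h\in C(e)}z_v^{(h)}=z_v\bigl(b+(k-1)a\bigr),
\]
and \eqref{average} is equivalent to $b+(k-1)a\le k$, that is, $(k-1)(a-1)\le 1-b$.

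If $k=1$, this reads $b\le1$, which holds since $b<1$. For $k\ge2$ the left side is increasing in $k$, because $a>1$. It therefore suffices to check the case $k=r$, namely $(r-1)(a-1)\le 1-b$. We substitute $a=\gamma/\beta=r\gamma/(r-1)$ and $b=(1-\gamma)/(1-\beta)=r(1-\gamma)$.
\begin{itemize}
\item The left side is $(r-1)(a-1)=r\gamma-(r-1)$.
\item The right side is $1-b=1-r+r\gamma$.
\end{itemize}
These are equal, so equality holds when $k=r$. Consequently $b+(k-1)a\le k$ for every $1\le k\le r$, which gives \eqref{average}.

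The weights are built so that the average multiplier over $r$ choices is exactly $1$, namely $\beta a+(1-\beta)b=1$ with $\beta=(r-1)/r$. So there is no real obstacle. The only points that need care are the following.
\begin{itemize}
\item The inequality is needed only for $v\in C(e)$. For $v\notin C(e)$ the risk is unchanged, which is what Lemma~\ref{clique-average} requires.
\item Freezing after the update does not alter the updated value.
\item The reduction to $k=r$ uses $a>1$, which was established after \eqref{rbeta}.
\end{itemize}

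Putting these together, the lemma follows, with equality when $|C(e)|=r$.
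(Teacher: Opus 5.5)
Your proof is correct and is essentially the paper's argument. Both compute $\sum_{h\in C(e)} z^{(h)}_v=\bigl(b+(|C(e)|-1)a\bigr)z_v$ and then combine $|C(e)|\le r$ with the identity $\beta a+(1-\beta)b=1$. The paper bounds $b+(s-1)a\le s\bigl((1-\beta)b+\beta a\bigr)$ using $a>b$, while you check equality at $k=r$ and use monotonicity in $k$ via $a>1$; both leave the same slack, $(r-k)(a-1)=(r-k)(a-b)/r$.
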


\begin{proof}
Put \(s:=|C(e)|\le r\) and fix \(v\in C(e)\). Among the \(s\) possible choices of $h$, \(y_v\) is increased once and \(x_v\) is increased \(s-1\) times.
Note that increasing \(y_v\) multiplies \(z_v\) by \(b\), whereas increasing \(x_v\) multiplies it by \(a\). Hence
\[
\sum_{h\in C(e)}z_v^{(h)}
=\left(b+(s-1)a \right)z_v \le \left( \frac{sb}{r}+\frac{(r-1)sa}{r}\right)z_v =\left( (1-\beta)b +\beta a \right)z_v s=z_v s,
\]
where the inequality follows from the fact that \(a>b\).
\end{proof}

\begin{proof}[Proof of \cref{main-theorem}]
Define a graph \(\mathcal G\) on \(V\) by joining distinct vertices \(u\) and \(v\) whenever \(F_u\cap F_v\ne\varnothing\). Then, for any \(e\in E\), the set \(\{v\in e:e\in F_v\}\) is a clique in \(\mathcal G\). For any \(v\in V\), each neighbor of \(v\) in \(\mathcal G\) lies with \(v\) in some hyperedge of \(F_v\), and hence \(d_{\mathcal G}(v)\le (r-1)|F_v|= (r-1)d\). Therefore, \(\Delta({\mathcal G})\le(r-1)d\).

Initially, the risk vector \(z=p\mathbf{1}_V\). By \eqref{p}, we have 
$$\e p((r-1)d+1)\le \e((r-1)d+1)\exp\left(-\frac{3\eta^2d}{8}\right)= \e((r-1)d+1) d^{-9/8} \le 1,$$ 
for sufficiently large $d$.
Since \(\Delta({\mathcal G})\le(r-1)d\), applying Lemma \ref{symmetric-shearer} with \(D=(r-1)d\) shows that the initial risk vector $p\mathbf{1}_V$ belongs to \(\cR({\mathcal G})\).

Suppose that the current risk vector \(z\) lies in \(\cR(\mathcal G)\) and that Maker claims a hyperedge \(e\in E\) in a full round.
If \(C(e)=\varnothing\), then \(z\) does not change. Otherwise, \(C(e)\subseteq \{v\in e:e\in F_v\}\) is a clique in \({\mathcal G}\). 
By Lemma \ref{risk-properties}, \eqref{average} holds for any $v\in C(e)$. 
Together with the fact that $z_v$ does not change for any $v\in V\setminus C(e)$, Lemma \ref{clique-average} shows that Breaker can select some $h\in C(e)$ such that $z^{(h)}\in \cR({\mathcal G})$. Thus, the risk vector $z$ remains in \(\cR(\mathcal G)\) after every virtual update

During the game, since \(z\in\cR({\mathcal G})\), we have \(1-z_v=Q_{\{v\}}(z)>0\) for every \(v\in V\). It follows from Lemma \ref{dangerous} that no vertex ever becomes dangerous. By Lemma \ref{virtual-to-real}, Breaker has a deterministic strategy such that, for every \( v\in V\),
\[
d_{\mathrm B}^F(v) \ge\frac{d-m}{2}-1
\ge\frac{d}{r+1}-\frac{\eta d}{2}-\frac32
=\frac{d}{r+1}-\frac{\sqrt{3d\log d}}{2}-\frac32> \frac{d}{r+1}-\sqrt{d\log d},
\]
where we have used the definition of $m$ and $\eta$.
Since \(d_{H_{\mathrm B}}(v)\ge d_{\mathrm B}^F(v)\) for any $v\in V$ at the end of the game, the same bound holds for $\delta(H_{\mathrm B})$.
This completes the proof.
\end{proof}

\section{Concluding remarks}
In this paper, we proved that Breaker can guarantee $\delta_B(H)\ge d/(r+1)-o(d)$ on every finite hypergraph $H$ with rank $r$ and minimum degree \(d\). 
The lower-order loss \(\sqrt{d\log d}\) in our proof comes from the Shearer initialization: a slack of \(\eta d\) gives initial risk \(\exp(-\Theta(\eta^2d))\), while the 2-section graph $\mathcal G$ has degree \(O_r(d)\).  Thus feasibility requires \(\eta^2d=\Theta(\log d)\), yielding the error term. 

The main remaining question is naturally to determine the correct asymptotic value of
the degree game.  Even for \(d\)-regular graphs, there is a substantial
gap between our \(d/3-o(d)\) lower bound and the natural upper bound \(d/2\).


\section*{Acknowledgments} 
The authors thank Professor Asaf Ferber for suggesting this problem at the 2025 Desert Discrete Mathematics Workshop, where this project began. The workshop was supported by NSF CAREER DMS-2146406.
K. Chen's contribution to this work was partially completed while he was visiting the Institute for Mathematical Sciences, National University of Singapore, during the program ``Innovations and Challenges in Extremal Combinatorics''.

\section*{Statement on AI}
The authors used AI as an exploratory tool to assist in developing the arguments. The authors have carefully checked all mathematical proofs and take full responsibility for the content of the manuscript.

\bibliographystyle{abbrv}
\bibliography{references}

@article{HKSZdegree,
  author  = {Hefetz, Dan and Krivelevich, Michael and Stojakovi{\'c}, Milo\v{s} and Szab{\'o}, Tibor},
  title   = {A sharp threshold for the {Hamilton} cycle {Maker--Breaker} game},
  journal = {Random Structures \& Algorithms},
  volume  = {34},
  number  = {1},
  pages   = {112--122},
  year    = {2009}
}

@article{BeckDeterministic,
  author  = {Beck, J{\'o}zsef},
  title   = {Deterministic Graph Games and a Probabilistic Intuition},
  journal = {Combinatorics, Probability and Computing},
  volume  = {3},
  number  = {1},
  pages   = {13--26},
  year    = {1994}
}

@book{HKSZBook,
  author    = {Hefetz, Dan and Krivelevich, Michael and
               Stojakovi{\'c}, Milo{\v s} and Szab{\'o}, Tibor},
  title     = {Positional Games},
  series    = {Oberwolfach Seminars},
  volume    = {44},
  publisher = {Birkh{\"a}user},
  address   = {Basel},
  year      = {2014}
}

@incollection{KrivelevichSurvey,
  author    = {Krivelevich, Michael},
  title     = {Positional Games},
  booktitle = {Proceedings of the International Congress of Mathematicians---Seoul 2014},
  editor    = {Jang, Sun Young and Kim, Young Rock and Lee, Dae-Woong and Yie, Ikkwon},
  volume    = {4},
  pages     = {355--379},
  publisher = {Kyung Moon Sa},
  address   = {Seoul},
  year      = {2014}
}

@article{DiscrepancyGames,
  title={Discrepancy games},
  author={Alon, Noga and Krivelevich, Michael and Spencer, Joel and Szab{\'o}, Tibor},
  journal={The Electronic Journal of Combinatorics},
  volume={12},
  number={1},
  pages={R51},
  year={2005},
  doi={10.37236/1948}
}

@article{CaroWestYuster,
  title={Equitable hypergraph orientations},
  author={Caro, Yair and West, Douglas B. and Yuster, Raphael},
  journal={The Electronic Journal of Combinatorics},
  volume={18},
  number={1},
  pages={P121},
  year={2011},
  doi={10.37236/608}
}

@article{GyorffyPluhar,
  title={Generalized pairing strategies---a bridge from pairing strategies to colorings},
  author={Gy{\H{o}}rffy, Lajos and Pluh{\'a}r, Andr{\'a}s},
  journal={Acta Universitatis Sapientiae, Mathematica},
  volume={8},
  number={2},
  pages={233--248},
  year={2016},
  doi={10.1515/ausm-2016-0015}
}

@misc{Knox,
  title={Two Constructions Relating to Conjectures of {Beck} on Positional Games},
  author={Knox, Fiachra},
  year={2012},
  note={arXiv:1212.3345}
}

@article{ScottSokal,
  author={Scott, Alexander D. and Sokal, Alan D.},
  title={The Repulsive Lattice Gas, the Independent-Set Polynomial, and the {Lov\'asz} Local Lemma},
  journal={Journal of Statistical Physics},
  volume={118},
  number={5--6},
  pages={1151--1261},
  year={2005}
}

@article{CohenLochet,
  title={Equitable orientations of sparse uniform hypergraphs},
  author={Cohen, Nathann and Lochet, William},
  journal={The Electronic Journal of Combinatorics},
  volume={23},
  number={4},
  pages={P4.31},
  year={2016},
  doi={10.37236/6152}
}

@misc{GyorffyDegree,
  title={Degree Game for Special Regular Graphs},
  author={Gy{\H{o}}rffy, Lajos},
  year={2026},
  note={arXiv:2608.11007}
}

@misc{GyorffyChooser,
  title={Chooser-Picker Degree Games for Regular Graphs},
  author={Gy{\H{o}}rffy, Lajos},
  year={2026},
  note={arXiv:2608.11035}
}

@inproceedings{BansalDiscrepancy,
  author={Bansal, Nikhil},
  title={Constructive Algorithms for Discrepancy Minimization},
  booktitle={Proceedings of the 51st Annual {IEEE} Symposium on Foundations of Computer Science},
  pages={3--10},
  publisher={IEEE Computer Society},
  year={2010}
}

@misc{BennettFriezePegden,
  author        = {Patrick Bennett and Alan Frieze and Wesley Pegden},
  title         = {Some Maker-Breaker Games on Hypergraphs},
  year={2025},
  note={arXiv:2509.02788}
}

@article{JumbleG,
  author  = {Alan Frieze and Michael Krivelevich and Oleg Pikhurko
             and Tibor Szab{\'o}},
  title   = {The Game of JumbleG},
  journal = {Combinatorics, Probability and Computing},
  volume  = {14},
  pages   = {783--793},
  year    = {2005},
  doi     = {10.1017/S0963548305006851}
}

@book{BeckBook,
  author    = {J{\'o}zsef Beck},
  title     = {Combinatorial Games: Tic-Tac-Toe Theory},
  series    = {Encyclopedia of Mathematics and its Applications},
  volume    = {114},
  publisher = {Cambridge University Press},
  address   = {Cambridge},
  year      = {2008},
  doi       = {10.1017/CBO9780511735202},
  isbn      = {978-0-521-46100-9}
}

@article{BaloghPluhar,
  author  = {J\'ozsef Balogh and Andr\'as Pluh\'ar},
  title   = {The Positive Minimum Degree Game on Sparse Graphs},
  journal = {The Electronic Journal of Combinatorics},
  volume  = {19},
  number  = {1},
  pages   = {P22},
  year    = {2012},
  doi     = {10.37236/1174}
}

@incollection{BeckSurplus,
  author    = {J{\'o}zsef Beck},
  title     = {Surplus of Graphs and the Lov{\'a}sz Local Lemma},
  booktitle = {Building Bridges: Between Mathematics and Computer Science},
  editor    = {Gr{\"o}tschel, Martin and Katona, Gyula O. H. and S{\'a}gi, G{\'a}bor},
  series    = {Bolyai Society Mathematical Studies},
  volume    = {19},
  pages     = {47--102},
  publisher = {Springer},
  year      = {2008}
}
\end{document}